\newtheorem{thm}{Theorem}[section]
\newtheorem{cor}[thm]{Corollary}
\theoremstyle{definition}
\theoremstyle{remark}
\newtheorem{rem}[thm]{Remark}
\numberwithin{equation}{section}
\title{Minimal Norm Tensors Principle and Its Applications}
\author{Zhen Guo\footnote{Zhen Guo: Department of Mathematics, YunNan Normal University, 650500, Kunming, People's Republic of China. e-mail:gzh2001y@yahoo.com}.ShanLin Guan \footnote{ShanLin Guan: Department of Mathematics, YunNan Normal University, 650500, Kunming, People's Republic of China. e-mail:1923080002@user.ynnu.edu.cn.} \thanks{The authors are supported by the project (No.11531012) of NSFC.}}
\date{}
\providecommand{\keywords}[1]
{
	\small	
	\textbf{Keywords:} #1
}
\providecommand{\classification}[1]
{
	\small	
	\textbf{Mathematics Subject Classification (2010):} #1
}
\begin{document}
	\maketitle
		\begin{abstract}
			In this paper we study the minimal norm tensors for general third covariant tensors and fourth covariant tensors, using this we can give a new explanation of Weyl tensor and Cotten tensor: Weyl tensor is the minimal norm tensor of Riemannian curvature tensor and Cotten tensor is the minimal norm tensor of divergence of Riemannian curvature tensor, and we also get some useful inequalities by computation the norm of minimal norm tensors.
		 
		\classification{53C24,53C25,53C21,53C80}	
			
		\keywords{Minimal norm tensors,traces decomposition of tensors, geometric inequalities}
	\end{abstract}
	
\section{Introduction} 
Let $V$ be a real $n-$dimensional vector space with a metric (inner product) $\langle \cdot, \cdot \rangle$, a standard basis of $V$ is denoted by $\left\{e_{i} \right\}$. Suppose $T$ is a third-order covariant tensor from space $V \times V \times V \to R$, we consider the following minimal values problem
\begin{align}
f(x^{1}_{1},x^{2}_{1},x^{3}_{1},x^{1}_{2},x^{2}_{2},x^{3}_{2},x^{1}_{3},x^{2}_{3},x^{3}_{3}) = \mid \mid F_{ijk} \mid \mid^{2} \label{eq1.1}
\end{align}
where
\begin{align}
F_{i_{1}i_{2}i_{3}} = {} & T_{i_{1}i_{2}i_{3}} + \frac{1}{2}(\sum_{\alpha=1}^{3} \sum_{\sigma \in \varphi(3)}x^{\sigma}_{\alpha} T^{\alpha}_{i_{\sigma(1)}} \delta_{i_{\sigma(2)}i_{\sigma(3)}}) \notag \\ = {} & T_{i_{1}i_{2}i_{3}}  + x^{1}_{1}T^{1}_{i_{1}}\delta_{i_{2}i_{3}} + x^{2}_{1}T^{1}_{i_{2}}\delta_{i_{1}i_{3}} + x^{3}_{1}T^{1}_{i_{3}}\delta_{i_{1}i_{2}} + x^{1}_{2}T^{2}_{i_{1}}\delta_{i_{2}i_{3}} \notag \\ + {} &  x^{2}_{2}T^{2}_{i_{2}}\delta_{i_{1}i_{3}} + x^{3}_{2}T^{2}_{i_{3}}\delta_{i_{1}i_{2}} + x^{1}_{3}T^{3}_{i_{1}}\delta_{i_{2}i_{3}} + x^{2}_{3}T^{3}_{i_{2}}\delta_{i_{1}i_{3}} + x^{3}_{3}T^{3}_{i_{3}}\delta_{i_{1}i_{2}}
\end{align}
where
\begin{align}
T_{i_{1}i_{2}i_{3}} = T(e_{i_{1}},e_{i_{2}},e_{i_{3}}), T^{1}_{i_{1}} = \sum_{i_{2} = i_{3} = 1}^{n} T_{i_{1}i_{2}i_{3}},T^{2}_{i_{2}} =  \sum_{i_{1} = i_{3} = 1}^{n} T_{i_{1}i_{2}i_{3}},T^{3}_{i_{3}} = \sum_{i_{1} = i_{2} = 1}^{n} T_{i_{1}i_{2}i_{3}}.
\end{align}
and $\varphi(3)$ is the third-order permutation group.
In this paper we study the question above at first, and we get the mimimal norm and minimal norm tensor as following.
\begin{thm}\label{thm1.1}
	Let $V$ be a real $n-$dimensional vector space with a metric (inner product) $\langle \cdot, \cdot \rangle$ and $T$ is a third-order covariant tensor, then the minimal norm tensor of equation \eqref{eq1.1} is
	\begin{align}
	F_{i_{1}i_{2}i_{3}} = {} & T_{i_{1}i_{2}i_{3}} - \frac{n+1}{(n-1)(n+2)}(T^{1}_{i_{1}}\delta_{i_{2}i_{3}} + T^{2}_{i_{2}}\delta_{i_{1}i_{3}} + T^{3}_{i_{3}}\delta_{i_{1}i_{2}}) \notag \\ + {} & \frac{1}{(n-1)(n+2)}(T^{1}_{i_{2}}\delta_{i_{1}i_{3}} + T^{1}_{i_{3}}\delta_{i_{1}i_{2}} + T^{2}_{i_{1}}\delta_{i_{2}i_{3}} + T^{2}_{i_{3}}\delta_{i_{1}i_{2}} + T^{3}_{i_{1}}\delta_{i_{2}i_{3}} + T^{3}_{i_{2}}\delta_{i_{1}i_{3}})
	\end{align}
	this is a trace-free tensor and the minimal norm is
	\begin{align}
	\mid \mid F_{i_{1}i_{2}i_{3}} \mid \mid^{2} = {} & \mid \mid T_{i_{1}i_{2}i_{3}} \mid \mid^{2} - \frac{n+1}{(n-1)(n+2)}(\mid \mid T^{1} \mid \mid^{2} + \mid \mid T^{2} \mid \mid^{2} + \mid \mid T^{3} \mid \mid^{2})  \notag \\ + {} & \frac{2}{(n-1)(n+2)}(\langle T^{1} , T^{2} \rangle + \langle T^{1} , T^{3} \rangle + \langle T^{2} , T^{3} \rangle)
	\end{align}
\end{thm}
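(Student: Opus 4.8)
The plan is to view \eqref{eq1.1} as the problem of minimizing $\|F\|^2$ over the affine subspace $T+W$, where $W$ is the linear span of the nine correction tensors $T^1_i\delta_{jk},\,T^1_j\delta_{ik},\,\dots,\,T^3_k\delta_{ij}$. Since $F\mapsto\|F\|^2$ is a strictly convex quadratic on that affine subspace, it has a unique minimizer, and the minimizer is characterized by the orthogonality condition $F\perp W$. So the whole argument reduces to making that condition explicit and solving the resulting linear system; an equivalent but messier route would be to write $f$ out as a quadratic in the nine scalar variables and set all nine partial derivatives to zero.

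The key observation that unlocks the orthogonality condition is the elementary contraction identity $\langle G_{ijk},\,S_i\delta_{jk}\rangle=\langle G^1,S\rangle$, valid for every $3$-tensor $G$ and every vector $S$, where $G^1_i=\sum_l G_{ill}$; likewise $\langle G_{ijk},\,S_j\delta_{ik}\rangle=\langle G^2,S\rangle$ and $\langle G_{ijk},\,S_k\delta_{ij}\rangle=\langle G^3,S\rangle$ for the other two slot patterns. Applying these with $G=F$ and $S\in\{T^1,T^2,T^3\}$ shows that any tensor of the prescribed form which is \emph{trace-free in all three slots}, i.e. $F^1=F^2=F^3=0$, is automatically orthogonal to all nine generators and hence to $W$; it is therefore the minimizer. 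This is also what forces the minimal norm tensor to be trace-free, as asserted. So it suffices to pin down the nine coefficients so that the three traces of $F$ vanish.

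Next I would compute $F^1$, $F^2$ and $F^3$ from the definition of $F_{ijk}$, contracting the ten summands one at a time; each $\delta_{\bullet\bullet}$ either produces a factor $n$ (when its two indices are the summed index) or collapses one trace into another. Each of $F^1,F^2,F^3$ emerges as a fixed linear combination of $T^1,T^2,T^3$, and setting these combinations to zero gives three decoupled $3\times3$ linear systems — one for $(x_1,x_2,x_3)$, one for $(y_1,y_2,y_3)$, one for $(z_1,z_2,z_3)$ — all with the same coefficient matrix $M=(n-1)I+J$, where $J$ is the $3\times3$ all-ones matrix, and with right-hand sides $-e_1,-e_2,-e_3$ respectively. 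From $J^2=3J$ one checks in one line that $M^{-1}=\tfrac{1}{n-1}\bigl(I-\tfrac{1}{n+2}J\bigr)$, and reading off its columns gives $x_1=-\tfrac{n+1}{(n-1)(n+2)}$, $x_2=x_3=\tfrac{1}{(n-1)(n+2)}$ together with the cyclic analogues for the $y$'s and $z$'s — exactly the stated formula for $F_{ijk}$.

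Finally, for the minimal norm I would exploit that $F$ is orthogonal to $F-T\in W$, so $\|F\|^2=\langle F,T\rangle$; expanding this pairing with the same contraction identities — now $\langle S_i\delta_{jk},T_{ijk}\rangle=\langle S,T^1\rangle$, while the middle- and last-slot contractions of $T$ against $T^a$ yield the mixed products $\langle T^1,T^2\rangle,\langle T^1,T^3\rangle,\langle T^2,T^3\rangle$ — and collecting terms produces the claimed value. The only genuine obstacle is the bookkeeping in the trace computation of the third paragraph: keeping straight, across ten terms and three different traces, which Kronecker delta contracts which pair of indices. Everything else is forced once the orthogonal-projection picture is set up.
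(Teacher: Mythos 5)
Your argument is correct, and it takes a genuinely different (and cleaner) route than the paper. The paper expands $f$ explicitly as a quadratic in the nine coefficients, sets all nine partial derivatives to zero, changes variables so that the critical-point equations become $A\bar{X}=0$ with $A$ the Gram matrix of $T^{1},T^{2},T^{3}$, and then invokes Cramer's rule --- which forces a case split on $\det A$, with the degenerate case $\det A=0$ only sketched (``by the similar discussion''). You instead use the orthogonal-projection characterization of the minimizer over the affine subspace $T+W$ together with the contraction identities $\langle G_{ijk},S_{i}\delta_{jk}\rangle=\langle G^{1},S\rangle$ (and their analogues for the other slots), which convert orthogonality to $W$ into the vanishing of the pairings $\langle F^{a},T^{b}\rangle$; since trace-freeness $F^{1}=F^{2}=F^{3}=0$ is a \emph{sufficient} condition for this, you need only solve the three decoupled systems $((n-1)I+J)\xi=-e_{a}$, which is exactly the linear algebra the paper also ends up doing (its barred variables $\bar{x}_{i},\bar{y}_{i},\bar{z}_{i}$ are precisely the coefficients of $T^{1},T^{2},T^{3}$ in the traces $F^{1},F^{2},F^{3}$). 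What your route buys is twofold: it requires no nondegeneracy hypothesis, since exhibiting one trace-free member of the family already identifies it as the unique minimizer whether or not $T^{1},T^{2},T^{3}$ are linearly independent, so the paper's $\det(T)=0$ case disappears entirely; and it yields the norm formula immediately from $\|F\|^{2}=\langle F,T\rangle$ rather than by substituting the solved coefficients back into the expanded quadratic. The one place to be careful is the phrasing that orthogonality ``forces'' trace-freeness: what you prove directly is the converse implication, which is all you need, though necessity does also follow since each $F^{a}$ lies in the span of $T^{1},T^{2},T^{3}$.
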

\noindent Moreover, if we let $T_{i_{1}i_{2}i_{3}} = T_{ijk} = R_{lijk,l}$, then we have the following corollary
\begin{cor}\label{cor1.2}
	Let $M^{n}$ be a $n-$dimensional Riemannian manifold and $V=T_{p}M$ on any point $p \in M$, consider $T_{i_{1}i_{2}i_{3}} = T_{ijk} = R_{lijk,l}$ (in the sense of Levi-Civita connection), then the minimal norm tensor of equation \eqref{eq1.1} is Cotton tensor:
	\begin{align}
	F_{ijk} = C_{ijk} = R_{ik,j} - R_{ij,k} - \frac{1}{2(n-1)}(R_{,j}\delta_{ik} - R_{,k}\delta_{ij})
	\end{align}
\end{cor}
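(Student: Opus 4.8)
The plan is to specialize Theorem~\ref{thm1.1} to the tensor $T_{ijk}=R_{lijk,l}$ by first rewriting both this tensor and its three traces in terms of the Ricci tensor and the scalar curvature, and then substituting into the closed formula for the minimal norm tensor given in Theorem~\ref{thm1.1}. The only inputs needed are the second Bianchi identity and its contractions; there is no analytic content.

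First I would fix the curvature conventions implicit in the statement (so that the Ricci contraction $R_{ik}=\sum_{l}R_{likl}$ and scalar curvature $R=\sum_{i}R_{ii}$ match the displayed Cotton tensor) and apply the contracted second Bianchi identity
\begin{align*}
	\sum_{l}R_{lijk,l} = R_{ik,j}-R_{ij,k},
\end{align*}
which already gives the closed form $T_{ijk}=R_{ik,j}-R_{ij,k}$ directly, not merely information about traces. From this I would compute the three traces appearing in \eqref{eq1.1}: setting $j=k$ yields $T^{1}_{i}=\sum_{l}(R_{il,l}-R_{il,l})=0$; next, using the twice-contracted Bianchi identity $\sum_{l}R_{li,l}=\tfrac12 R_{,i}$, one gets $T^{2}_{i}=\sum_{l}(R_{ll,i}-R_{li,l})=R_{,i}-\tfrac12 R_{,i}=\tfrac12 R_{,i}$ and similarly $T^{3}_{i}=\sum_{l}(R_{li,l}-R_{ll,i})=-\tfrac12 R_{,i}$. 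In particular $T^{1}=0$ and $T^{3}=-T^{2}$.

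Then I would substitute these into the formula for $F_{ijk}$ from Theorem~\ref{thm1.1}. Using $T^{1}=0$ kills six of the nine correction terms, and using $T^{3}=-T^{2}$ causes the two terms proportional to $T^{2}_{i}\delta_{jk}$ to cancel against each other; what survives is
\begin{align*}
	F_{ijk}=T_{ijk}-\frac{n+1}{(n-1)(n+2)}\bigl(T^{2}_{j}\delta_{ik}-T^{2}_{k}\delta_{ij}\bigr)-\frac{1}{(n-1)(n+2)}\bigl(T^{2}_{j}\delta_{ik}-T^{2}_{k}\delta_{ij}\bigr),
\end{align*}
so the two coefficients add to $\dfrac{n+2}{(n-1)(n+2)}=\dfrac{1}{n-1}$. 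Inserting $T^{2}_{j}=\tfrac12 R_{,j}$, $T^{2}_{k}=\tfrac12 R_{,k}$ and $T_{ijk}=R_{ik,j}-R_{ij,k}$ yields exactly
\begin{align*}
	F_{ijk}=R_{ik,j}-R_{ij,k}-\frac{1}{2(n-1)}\bigl(R_{,j}\delta_{ik}-R_{,k}\delta_{ij}\bigr)=C_{ijk},
\end{align*}
which is the claim.

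The main obstacle is purely bookkeeping: getting the index/sign conventions for the Riemann and Ricci tensors consistent with the stated Cotton tensor, and applying the second Bianchi identity (and its single and double contractions) to the correct slots when evaluating $T^{1},T^{2},T^{3}$. Once the identity $\sum_{l}R_{lijk,l}=R_{ik,j}-R_{ij,k}$ is in hand, the computation of the traces and the algebraic simplification of the coefficients are routine.
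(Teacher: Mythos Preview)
Your proposal is correct and follows exactly the same route as the paper: apply the contracted second Bianchi identity to rewrite $T_{ijk}=R_{lijk,l}=R_{ik,j}-R_{ij,k}$, compute $T^{1}=0$, $T^{2}_{i}=\tfrac12 R_{,i}$, $T^{3}_{i}=-\tfrac12 R_{,i}$, and substitute into the formula of Theorem~\ref{thm1.1}. The only slip is cosmetic: setting $T^{1}=0$ kills three of the nine correction terms, not six, but your displayed expression and the subsequent simplification to $-\tfrac{1}{n-1}(T^{2}_{j}\delta_{ik}-T^{2}_{k}\delta_{ij})$ are correct.
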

\noindent In the similar manner we consider $T$ is a fourth-order covariant tensor from space $V \times V \times V \times V \to R$. Under a standard basis $\left\{e_{i} \right\}$ of $V$, we have the following notations
\begin{align}
T_{i_{1}i_{2}i_{3}i_{4}} = T(e_{i_{1}},e_{i_{2}},e_{i_{3}},e_{i_{4}}), T^{34}_{i_{3}i_{4}} = T^{43}_{i_{3}i_{4}} = \sum^{n}_{i_{1}=i_{2}=1}T_{i_{1}i_{2}i_{3}i_{4}}, T^{24}_{i_{2}i_{4}} = T^{42}_{i_{2}i_{4}} =  \sum^{n}_{i_{1}=i_{3}=1}T_{i_{1}i_{2}i_{3}i_{4}}.
\end{align}
\begin{align}
T^{23}_{i_{2}i_{3}} = T^{32}_{i_{2}i_{3}} = \sum^{n}_{i_{1}=i_{4}=1}T_{i_{1}i_{2}i_{3}i_{4}},T^{14}_{i_{1}i_{4}} = T^{41}_{i_{1}i_{4}} = \sum^{n}_{i_{2} = i_{3} =1}T_{i_{1}i_{2}i_{3}i_{4}}, T^{13}_{i_{1}i_{3}} = \sum^{n}_{i_{2}=i_{4}=1}T_{i_{1}i_{2}i_{3}i_{4}}.
\end{align}
\begin{align}
T^{12}_{i_{1}i_{2}} = T^{21}_{i_{1}i_{2}} =  \sum^{n}_{i_{3}=i_{4}=1}T_{i_{1}i_{2}i_{3}i_{4}},
t^{12} = t^{21} = t^{34} = t^{43} = \sum^{n}_{i_{3}=i_{4}=1} T^{34}_{i_{3}i_{4}} = \sum^{n}_{i_{1}=i_{2}=1} T^{12}_{i_{1}i_{2}} .
\end{align}
\begin{align}
t^{13} = t^{31} = t^{24} = t^{42} =  \sum^{n}_{i_{2}=i_{4}=1} T^{24}_{i_{2}i_{4}} = \sum^{n}_{i_{1}=i_{3}=1} T^{13}_{i_{1}i_{3}}.
\end{align}
\begin{align}
t^{14} = t^{41} = t^{23} = t^{32} =   \sum^{n}_{i_{1}=i_{4}=1} T^{14}_{i_{1}i_{4}} = \sum^{n}_{i_{2}=i_{3}=1} T^{23}_{i_{2}i_{3}}.
\end{align}
Then we consider the following minimal values problem
\begin{align}
h = \mid \mid F_{i_{1}i_{2}i_{3}i_{4}} \mid \mid^{2} \label{eq1.11}
\end{align}
where
\begin{align}
F_{i_{1}i_{2}i_{3}i_{4}} = T_{i_{1}i_{2}i_{3}i_{4}} + {} & \frac{1}{4} \sum_{1 \le \alpha \neq \beta \le 4} \sum_{\sigma \in \varphi(4)} x^{\sigma(1)\sigma(2)}_{\alpha \beta} T^{\alpha \beta}_{i_{\sigma(1)}i_{\sigma(2)}} \delta_{i_{\sigma(3)}i_{\sigma(4)}} \notag \\ + {} & \frac{1}{4} \sum_{\sigma \in \varphi(4)} y^{\sigma(1)\sigma(2)} \delta_{i_{\sigma(1)}i_{\sigma(2)}} \delta_{i_{\sigma(3)}i_{\sigma(4)}}
\end{align}
and $\varphi(4)$ is the fourth-order permutation group. Then we have the following theorem.
\begin{thm}\label{thm1.3}
	Let $V$ be a real $n-$dimensional vector space with a metric (inner product) $\langle \cdot, \cdot \rangle$ and $T$ is a fourth-order covariant tensor, then the minimal norm tensor of equation \eqref{eq1.11} is
	\begin{align}
	F_{i_{1}i_{2}i_{3}i_{4}} = {} &  T_{i_{1}i_{2}i_{3}i_{4}} - \frac{2}{n(n-2)(n+4)} \sum_{\sigma \in \varphi^{\prime}_{1}(4)} T^{\sigma(3)\sigma(4)}_{i_{\sigma(1)}i_{\sigma(2)}} \delta_{i_{\sigma(3)}i_{\sigma(4)}} \notag \\ + {} & \frac{2}{n(n-2)(n+4)(n+2)}\sum_{\sigma \in \varphi^{\prime}_{1}(4)} T^{\sigma(3)\sigma(4)}_{i_{\sigma(4)}i_{\sigma(3)}} \delta_{i_{\sigma(1)}i_{\sigma(2)}} \notag \\ - {} & \frac{n^3+4n^2-4}{2n(n-2)(n+4)(n+2)}\sum_{\sigma \in \varphi^{\prime}_{1}(4)} T^{\sigma(3)\sigma(4)}_{i_{\sigma(3)}i_{\sigma(4)}}\delta_{i_{\sigma(1)}i_{\sigma(2)}} \notag \\ + {} & \frac{n+3}{(n-2)(n+4)(n+2)} \sum_{\sigma \in \varphi^{\prime}_{1}(4)}(T^{\sigma(3)\sigma(4)}_{i_{\sigma(3)}i_{\sigma(1)}}\delta_{i_{\sigma(2)}i_{\sigma(4)}} + T^{\sigma(3)\sigma(4)}_{i_{\sigma(1)}i_{\sigma(4)}}\delta_{i_{\sigma(2)}i_{\sigma(3)}}) \notag \\ - {} & \frac{1}{(n-2)(n+2)(n+4)}\sum_{\sigma \in \varphi^{\prime}_{1}(4)}(T^{\sigma(3)\sigma(4)}_{i_{\sigma(1)}i_{\sigma(3)}}\delta_{i_{\sigma(2)}i_{\sigma(4)}} + T^{\sigma(3)\sigma(4)}_{i_{\sigma(4)}i_{\sigma(1)}}\delta_{i_{\sigma(2)}i_{\sigma(3)}}) \notag \\ + {} &  \frac{n^2+3n+6}{4(n-2)(n+4)(n+2)(n-1)}\sum_{\sigma \in \varphi^{\prime}_{1}(4)}t^{\sigma(1)\sigma(2)}\delta_{i_{\sigma(1)}i_{\sigma(2)}} \delta_{i_{\sigma(3)}i_{\sigma(4)}} \notag \\ - {} &  \frac{3n+2}{2(n-2)(n+4)(n+2)(n-1)}\sum_{\sigma \in \varphi^{\prime}_{1}(4)}t^{\sigma(1)\sigma(3)}\delta_{i_{\sigma(1)}i_{\sigma(2)}} \delta_{i_{\sigma(3)}i_{\sigma(4)}} \label{eq1.14}
	\end{align}
	where $\varphi^{\prime}_{1}(4) = \lbrace \sigma \in \varphi(4) \mid \sigma(3) < \sigma(4) \rbrace$. 	This is a trace-free tensor and the minimal norm is \begin{align}
	\mid \mid F_{i_{1}i_{2}i_{3}i_{4}} \mid \mid^{2} = {} &  \mid \mid T_{i_{1}i_{2}i_{3}i_{4}} \mid \mid^{2} - \frac{2}{n(n-2)(n+4)} \sum_{\sigma \in \varphi^{\prime}_{2}(4)} T^{\sigma(3)\sigma(4)}_{i_{\sigma(1)}i_{\sigma(2)}} (T^{\sigma(1)\sigma(2)}_{i_{\sigma(1)}i_{\sigma(2)}} + T^{\sigma(1)\sigma(2)}_{i_{\sigma(2)}i_{\sigma(1)}}) \notag \\ + {} & \frac{4}{n(n-2)(n+4)(n+2)}\sum_{\sigma \in \varphi^{\prime}_{2}(4)} T^{\sigma(3)\sigma(4)}_{i_{\sigma(4)}i_{\sigma(3)}} T^{\sigma(3)\sigma(4)}_{i_{\sigma(3)}i_{\sigma(4)}} \notag \\ - {} & \frac{n^3+4n^2-4}{2n(n-2)(n+4)(n+2)}\sum_{\sigma \in \varphi^{\prime}_{1}(4)} T^{\sigma(3)\sigma(4)}_{i_{\sigma(3)}i_{\sigma(4)}}T^{\sigma(3)\sigma(4)}_{i_{\sigma(3)}i_{\sigma(4)}} \notag \\ + {} & \frac{2(n+3)}{(n-2)(n+4)(n+2)} (\sum_{\sigma \in \varphi^{\prime}_{3}(4)}T^{\sigma(3)\sigma(4)}_{i_{\sigma(3)}i_{\sigma(1)}}T^{\sigma(1)\sigma(3)}_{i_{\sigma(1)}i_{\sigma(3)}} + \sum_{\sigma \in \varphi^{\prime}_{4}(4)} T^{\sigma(3)\sigma(4)}_{i_{\sigma(1)}i_{\sigma(4)}} T^{\sigma(1)\sigma(4)}_{i_{\sigma(1)}i_{\sigma(4)}}) \notag \\ - {} & \frac{2}{(n-2)(n+2)(n+4)}(\sum_{\sigma \in \varphi^{\prime}_{3}(4)}T^{\sigma(3)\sigma(4)}_{i_{\sigma(1)}i_{\sigma(3)}}T^{\sigma(1)\sigma(3)}_{i_{\sigma(1)}i_{\sigma(3)}} + \sum_{\sigma \in \varphi^{\prime}_{4}(4)} T^{\sigma(3)\sigma(4)}_{i_{\sigma(4)}i_{\sigma(1)}}T^{\sigma(1)\sigma(4)}_{i_{\sigma(1)}i_{\sigma(4)}}) \notag \\ + {} &  \frac{n^2+3n+6}{4(n-2)(n+4)(n+2)(n-1)}\sum_{\sigma \in \varphi^{\prime}_{1}(4)}(t^{\sigma(1)\sigma(2)})^{2} \notag \\ - {} &  \frac{3n+2}{2(n-2)(n+4)(n+2)(n-1)}\sum_{\sigma \in \varphi^{\prime}_{1}(4)}t^{\sigma(1)\sigma(3)}t^{\sigma(1)\sigma(2)}
	\end{align}
	where $\varphi^{\prime}_{2}(4) = \lbrace \sigma \in \varphi(4) \mid \sigma(3) < \sigma(4), \sigma(1) < \sigma(2) \rbrace$, $\varphi^{\prime}_{3}(4) = \lbrace \sigma \in \varphi(4) \mid \sigma(3) < \sigma(4), \sigma(1) < \sigma(3) \rbrace$ and $\varphi^{\prime}_{4}(4) = \lbrace \sigma \in \varphi(4) \mid \sigma(3) < \sigma(4), \sigma(1) < \sigma(4) \rbrace$.
\end{thm}
\noindent In particular, if $T^{\alpha \beta}_{i_{1}i_{2}}$ are symmetric i.e. $T^{\alpha \beta}_{i_{1}i_{2}} = T^{\alpha \beta}_{i_{2}i_{1}}$ for any $1 \le \alpha \neq \beta \le 4$ , then we have the following corollary.
\begin{cor}
	Let $V$ be a real $n-$dimensional vector space with a metric (inner product) $\langle \cdot, \cdot \rangle$ and $T$ is a fourth-order covariant tensor with symmetric traces in any two indices, then the minimal norm tensor of equation \eqref{eq1.11} is
	\begin{align}
	F_{i_{1}i_{2}i_{3}i_{4}} = {} &  T_{i_{1}i_{2}i_{3}i_{4}} - \frac{1}{n(n-2)(n+4)} \sum_{\sigma \in \varphi(4)} T^{\sigma(3)\sigma(4)}_{i_{\sigma(1)}i_{\sigma(2)}} \delta_{i_{\sigma(3)}i_{\sigma(4)}} \notag \\ - {} & \frac{n^2+2n-4}{4n(n-2)(n+4)}\sum_{\sigma \in \varphi(4)} T^{\sigma(3)\sigma(4)}_{i_{\sigma(3)}i_{\sigma(4)}} \delta_{i_{\sigma(1)}i_{\sigma(2)}}  \notag \\ + {} & \frac{1}{(n-2)(n+4)} \sum_{\sigma \in \varphi(4)}T^{\sigma(3)\sigma(4)}_{i_{\sigma(3)}i_{\sigma(1)}}\delta_{i_{\sigma(2)}i_{\sigma(4)}}  \notag \\ + {} &  \frac{n^2+3n+6}{8(n-2)(n+4)(n+2)(n-1)}\sum_{\sigma \in \varphi(4)}t^{\sigma(1)\sigma(2)}\delta_{i_{\sigma(1)}i_{\sigma(2)}} \delta_{i_{\sigma(3)}i_{\sigma(4)}} \notag \\ - {} &  \frac{3n+2}{4(n-2)(n+4)(n+2)(n-1)}\sum_{\sigma \in \varphi(4)}t^{\sigma(1)\sigma(3)}\delta_{i_{\sigma(1)}i_{\sigma(2)}} \delta_{i_{\sigma(3)}i_{\sigma(4)}}
	\end{align}
	This is a trace-free tensor and the minimal norm is
	\begin{align}
	\mid \mid F_{i_{1}i_{2}i_{3}i_{4}} \mid \mid^{2}= {} &  \mid \mid T_{i_{1}i_{2}i_{3}i_{4}} \mid \mid^{2} - \frac{1}{n(n-2)(n+4)} \sum_{\sigma \in \varphi(4)} T^{\sigma(3)\sigma(4)}_{i_{\sigma(1)}i_{\sigma(2)}}T^{\sigma(1)\sigma(2)}_{i_{\sigma(1)}i_{\sigma(2)}}  \notag \\ - {} & \frac{n^2+2n-4}{4n(n-2)(n+4)}\sum_{\sigma \in \varphi(4)} T^{\sigma(3)\sigma(4)}_{i_{\sigma(3)}i_{\sigma(4)}} T^{\sigma(3)\sigma(4)}_{i_{\sigma(3)}i_{\sigma(4)}}   \notag \\ + {} & \frac{1}{(n-2)(n+4)} \sum_{\sigma \in \varphi(4)}T^{\sigma(3)\sigma(4)}_{i_{\sigma(3)}i_{\sigma(1)}}T^{\sigma(1)\sigma(3)}_{i_{\sigma(3)}i_{\sigma(1)}}  \notag \\ + {} &  \frac{n^2+3n+6}{8(n-2)(n+4)(n+2)(n-1)}\sum_{\sigma \in \varphi(4)}(t^{\sigma(1)\sigma(2)})^{2} \notag \\ - {} &  \frac{3n+2}{4(n-2)(n+4)(n+2)(n-1)}\sum_{\sigma \in \varphi(4)}t^{\sigma(1)\sigma(3)}t^{\sigma(1)\sigma(2)}
	\end{align}
	In particular, if   $T_{i_{1}i_{2}i_{3}i_{4}}$ is totally symmetric for any two indices,  then the minimal norm tensor of equation \eqref{eq1.11} is
	\begin{align}
	F_{i_{1}i_{2}i_{3}i_{4}} = {} & T_{i_{1}i_{2}i_{3}i_{4}} - \frac{1}{4(n+4)}\sum_{\sigma \in \varphi(4)} T_{i_{\sigma(1)}i_{\sigma(2)}} \delta_{i_{\sigma(3)}i_{\sigma(4)}} \notag \\ + {} & \frac{T}{8(n+4)(n+2)}\sum_{\sigma \in \varphi(4)} \delta_{i_{\sigma(1)}i_{\sigma(2)}} \delta_{i_{\sigma(3)}i_{\sigma(4)}}
	\end{align}
	where $T_{i_{1}i_{2}} = \sum^{n}_{i_{3} = i_{4} = 1} T_{i_{1}i_{2}i_{3}i_{4}}$ and $T = \sum^{n}_{i_{1} = i_{2} =1}T_{i_{1}i_{2}}$. This is a trace-free tensor and the minimal norm is
	\begin{align}
	\mid \mid F_{i_{1}i_{2}i_{3}i_{4}} \mid \mid^{2} = \mid \mid T_{i_{1}i_{2}i_{3}i_{4}} \mid \mid^{2} -  \frac{6}{n+4}\mid \mid T_{i_{1}i_{2}} \mid \mid^{2} + \frac{3T^{2}}{(n+4)(n+2)}
	\end{align}
\end{cor}

\noindent Moreover, if we let $T _{i_{1}i_{2}i_{3}i_{4}} = T_{ijkl} = R_{ijkl}$, then we have the following corollary
\begin{cor}\label{cor1.5}
	Let $M^{n}$ be a $n-$dimensional Riemannian manifold and $V=T_{p}M$ on any point $p \in M$, consider $T _{i_{1}i_{2}i_{3}i_{4}} = T_{ijkl} = R_{ijkl}$, then the minimal norm tensor of equation \eqref{eq1.11} is Weyl conformal curvature tensor:
	\begin{align}
	F_{ijkl} = W_{ijkl} = {} & R_{ijkl}  - \frac{1}{n-2}[R_{jl}\delta_{ik} - R_{jk}\delta_{il} - R_{il}\delta_{jk} + R_{ik}\delta_{jl}] \notag \\ & + \frac{R}{(n-1)(n-2)}(\delta_{jl}\delta_{ik} - \delta_{jk}\delta_{il})
	\end{align}
\end{cor}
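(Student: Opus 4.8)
The plan is to substitute $T_{ijkl}=R_{ijkl}$ into the formula of Theorem~\ref{thm1.3} and simplify using the algebraic symmetries $R_{ijkl}=-R_{jikl}=-R_{ijlk}=R_{klij}$ of the Riemann tensor. Since every two-index trace of $R$ turns out to be symmetric, it is most economical to apply instead the corollary above for fourth-order tensors with symmetric traces. So the first step is to evaluate the traces. Skew-symmetry in a coincident pair gives $T^{(1,2)}_{ij}=\sum_k R_{kkij}=0$ and $T^{(3,4)}_{ij}=\sum_k R_{ijkk}=0$. Writing $R_{ij}:=T^{(1,3)}_{ij}=\sum_k R_{kikj}$ for the Ricci tensor (symmetric, by the pair symmetry), skew-symmetry in the last pair gives $T^{(1,4)}_{ij}=\sum_k R_{kijk}=-R_{ij}$, skew-symmetry in the first pair gives $T^{(2,3)}_{ij}=\sum_k R_{ikkj}=-R_{ij}$, and using both gives $T^{(2,4)}_{ij}=\sum_k R_{ikjk}=R_{ij}$. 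For the double traces, $T^{(1,2)(3,4)}=\sum_{l,k}R_{llkk}=0$, while $T^{(1,3)(2,4)}=\sum_{l,k}R_{lklk}=R$ (the scalar curvature, $R=\sum_l R_{ll}$) and, by last-pair skew-symmetry, $T^{(1,4)(2,3)}=\sum_{l,k}R_{lkkl}=-R$.

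The second step is the substitution. Since $T^{(1,2)}$ and $T^{(3,4)}$ vanish identically, about half of the correction terms disappear at once, and after using $R_{ij}=R_{ji}$ the surviving terms built from the single traces all reduce to a multiple of the single tensor $S_{ijkl}:=R_{ik}\delta_{jl}-R_{il}\delta_{jk}-R_{jk}\delta_{il}+R_{jl}\delta_{ik}$, while the pure $\delta\delta$ terms reduce to a multiple of $\delta_{ik}\delta_{jl}-\delta_{il}\delta_{jk}$ alone — the $\delta_{ij}\delta_{kl}$ contributions cancel, a reassuring check since the Weyl tensor has no such term. Collecting coefficients, the three single-trace blocks (prefactors $\tfrac{4}{n(n-2)(n+4)}$, $\tfrac{n^2+2n-4}{n(n-2)(n+4)}$, $\tfrac{1}{(n-2)(n+4)}$) contribute to the coefficient of $S_{ijkl}$ a total of
\[
-\frac{4+(n^2+2n-4)}{n(n-2)(n+4)}-\frac{2}{(n-2)(n+4)}=-\frac{n+2}{(n-2)(n+4)}-\frac{2}{(n-2)(n+4)}=-\frac{1}{n-2},
\]
and the two double-trace blocks contribute to the coefficient of $\delta_{ik}\delta_{jl}-\delta_{il}\delta_{jk}$ a total of
\[
\frac{(n^2+3n+6)+(3n+2)}{(n-1)(n-2)(n+2)(n+4)}\,R=\frac{(n+2)(n+4)}{(n-1)(n-2)(n+2)(n+4)}\,R=\frac{R}{(n-1)(n-2)}.
\]

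Assembling these pieces gives
\[
F_{ijkl}=R_{ijkl}-\frac{1}{n-2}\big(R_{ik}\delta_{jl}-R_{il}\delta_{jk}-R_{jk}\delta_{il}+R_{jl}\delta_{ik}\big)+\frac{R}{(n-1)(n-2)}\big(\delta_{ik}\delta_{jl}-\delta_{il}\delta_{jk}\big),
\]
which, after reordering terms, is exactly $W_{ijkl}$; and Theorem~\ref{thm1.3} already guarantees that this tensor is trace-free and realizes the minimal norm of \eqref{eq1.10}. I expect the only real work to be the bookkeeping in the substitution step — deciding, for each of the many $T^{(a,b)}_{\cdot\cdot}\delta_{\cdot\cdot}$ terms, whether it enters with $+R_{\cdot\cdot}$ or $-R_{\cdot\cdot}$ once the first-pair and last-pair skew-symmetries are applied — together with the coefficient arithmetic above; no idea beyond Theorem~\ref{thm1.3} and the standard curvature identities is needed. (One can also argue abstractly: for a tensor with the Riemann symmetries the admissible correction terms in \eqref{eq1.10} span precisely the orthogonal complement of the Weyl component in the Ricci decomposition, so the minimizer must be $W$; but the direct computation is cleaner to present.)
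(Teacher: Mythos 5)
Your proposal is correct and follows essentially the same route as the paper: compute the traces of the Riemann tensor, substitute into the general minimal-norm formula, and collect coefficients (your arithmetic $-\tfrac{n+2}{(n-2)(n+4)}-\tfrac{2}{(n-2)(n+4)}=-\tfrac{1}{n-2}$ and $\tfrac{(n+2)(n+4)}{(n-1)(n-2)(n+2)(n+4)}=\tfrac{1}{(n-1)(n-2)}$ checks out, as does the cancellation of the $\delta_{ij}\delta_{kl}$ terms). Your trace list is in fact more carefully stated than the paper's, which contains a typo (it writes $T^{(1,2)}_{ij}=T^{(1,3)}_{ij}=0$ where it means $T^{(1,2)}_{ij}=T^{(3,4)}_{ij}=0$), and your choice to route the substitution through the symmetric-trace corollary is a harmless simplification since all six traces of $R_{ijkl}$ are indeed symmetric.
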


\begin{rem}
	There are many researchers study the trace-free decomposition tensors. J.W.Alexander research the decomposition of tensor  in  \cite{alexander1926decomposition}, D.Krupka research the trace-free decomposition in  \cite{krupka1995trace,krupka1996trace,krupka2003weyl,krupka2006trace,krupka2017decompositions} and he get some formulaes of trace-free decomposition for some special type of tensors, in particular, he get a nice corollary that the  trace-free decomposition tensor of (2,2) and (1,3) Riemannian curvature tensor are (2,2) Weyl conformal curvature tensor and (1,3) projective curvature tensor, respectively. And the trace in \cite{krupka1995trace,krupka1996trace,krupka2003weyl,krupka2006trace,krupka2017decompositions} means contraction upper indices with lower indices(In our paper trace means contraction with all indices). J.Mikes, M.Jukl, L.Juklov{\'a}, L.Lakom{\'a} generalize some results of D.Krupka and give some applications in \cite{mikes1996general,jukl2013decomposition,lakoma2004decomposition,mikes2011some}. M.Cr{\^a}{\c{s}}m{\u{a}}reanu research trace decomposition in recurrence problems and applications to almost projective invariants in \cite{cracsmuareanu2001particular,crasmareanu2001trace}. V.T.Toth and S.G.Turyshev research the trace-free decomposition of $(0,3)$ tensor and totally symmetric tensor of $(0,r)$ from the point of physics in \cite{toth2021efficient}.
\end{rem}

\section{Proof of Main Theorems}
\begin{proof}[Proof of theorem \ref{thm1.1}] 		By a direct computation we have
	\begin{align}
	\frac{\partial F_{i_{1}i_{2}i_{3}}}{\partial x^{\gamma}_{\beta}} = {} & \frac{1}{2} \sum_{\alpha=1}^{3} \sum_{\sigma \in \varphi(3)} \delta_{\gamma}^{\sigma(1)} \delta_{\beta}^{\alpha} T^{\alpha}_{i_{\sigma(1)}} \delta_{i_{\sigma(2)}i_{\sigma(3)}} \notag \\ = {} & \delta^{1}_{\gamma}T^{\beta}_{i_{1}}\delta_{i_{2}i_{3}} + \delta^{2}_{\gamma} T^{\beta}_{i_{2}} \delta_{i_{1}i_{3}} + \delta^{3}_{\gamma} T^{\beta}_{i_{3}} \delta_{i_{1}i_{2}}.
	\end{align}
	Next we compute the trace of $F_{i_{1}i_{2}i_{3}}$
	\begin{align}
	F^{1}_{i_{1}} = {} & \sum^{n}_{i_{2}=i_{3} = 1}  (T_{i_{1}i_{2}i_{3}} + \frac{1}{2}(\sum_{\alpha=1}^{3} \sum_{\sigma \in \varphi(3)}x^{\sigma}_{\alpha} T^{\alpha}_{i_{\sigma(1)}} \delta_{i_{\sigma(2)}i_{\sigma(3)}}) ) \notag \\ = {} & T^{1}_{i_{1}} + \sum_{\alpha=1}(n x^{1}_{\alpha} + x^{2}_{\alpha} + x^{3}_{\alpha})T^{\alpha}_{i_{1}} \notag \\ = {} &  T^{1}_{i_{1}} + \sum_{\alpha=1} y^{1}_{\alpha}T^{\alpha}_{i_{1}}.
	\end{align}
	In the similar manner we have
	\begin{align}
	F^{2}_{i_{2}} = T^{2}_{i_{2}} +  \sum_{\alpha=1} (x^{1}_{\alpha} + n x^{2}_{\alpha} + x^{3}_{\alpha})T^{\alpha}_{i_{2}} =  T^{2}_{i_{2}} +  \sum_{\alpha=1} y^{2}_{\alpha}T^{\alpha}_{i_{2}}.
	\end{align}
	\begin{align}
	F^{3}_{i_{3}} = T^{3}_{i_{3}} + \sum_{\alpha=1} (x^{1}_{\alpha} + x^{2}_{\alpha} + n x^{3}_{\alpha})T^{\alpha}_{i_{3}} =  T^{3}_{i_{3}} +  \sum_{\alpha=1} y^{3}_{\alpha}T^{\alpha}_{i_{3}}.
	\end{align}
	Then we have
	\begin{align}
	\frac{1}{2}\frac{\partial f}{\partial x^{\gamma}_{\beta}} = {} & \langle \frac{\partial  F_{i_{1}i_{2}i_{3}}}{\partial x^{\gamma}_{\beta}}, F_{i_{1}i_{2}i_{3}} \rangle = \delta^{1}_{\gamma} T^{\beta}_{i_{1}} F^{1}_{i_{1}} +  \delta^{2}_{\gamma} T^{\beta}_{i_{2}} F^{2}_{i_{2}} + \delta^{3}_{\gamma} T^{3}_{i_{3}} T^{3}_{i_{3}} \notag \\ = {} & \sum_{\alpha=1} \delta^{\alpha}_{\gamma} \langle T^{\beta},F^{\alpha} \rangle = \langle T^{\beta}, F^{\gamma}\rangle =  \langle T^{\beta}, T^{\gamma} + \sum_{\alpha=1}y^{\gamma}_{\alpha} T^{\alpha} \rangle \notag \\ = {} &  \sum_{\alpha=1} \langle T^{\beta}, \delta^{\gamma}_{\alpha}T^{\alpha} + y^{\gamma}_{\alpha} T^{\alpha} \rangle = (\delta^{\gamma}_{\alpha} + y^{\gamma}_{\alpha}) \langle T^{\beta},T^{\alpha} \rangle = 0.
	\end{align}
	Or equivalently
	\begin{align}
	(I + Y)A = (I + BX) A = O.
	\end{align}
	where $a_{\alpha \beta} = \langle T^{\alpha},T^{\beta} \rangle$ , $O$ is the zero matrix of $3 \times 3$ whose elements all are zero and
	\begin{align}
	B =
	\begin{bmatrix}
	n & 1 & 1 \\
	1 & n & 1 \\
	1 & 1 & n
	\end{bmatrix}
	\end{align}
	such that $Y = BX$. \\
	1) If $\det(A) \neq 0$, then $X = - B^{-1}$ and
	the minimal norm tensor (It is easy to see the tensor we solved is minimal) is
	\begin{align}
	F_{i_{1}i_{2}i_{3}} = {} & T_{i_{1}i_{2}i_{3}} - \frac{n+1}{(n-1)(n+2)}(T^{1}_{i_{1}}\delta_{i_{2}i_{3}} + T^{2}_{i_{2}}\delta_{i_{1}i_{3}} + T^{3}_{i_{3}}\delta_{i_{1}i_{2}}) \notag \\ + {} & \frac{1}{(n-1)(n+2)}(T^{1}_{i_{2}}\delta_{i_{1}i_{3}} + T^{1}_{i_{3}}\delta_{i_{1}i_{2}} + T^{2}_{i_{1}}\delta_{i_{2}i_{3}} + T^{2}_{i_{3}}\delta_{i_{1}i_{2}} + T^{3}_{i_{1}}\delta_{i_{2}i_{3}} + T^{3}_{i_{2}}\delta_{i_{1}i_{3}}) \label{eq2.8}
	\end{align}
	This is a trace-free tensor and by a direct computation the minimal norm is
	\begin{align}
	\mid \mid F_{i_{1}i_{2}i_{3}} \mid \mid^{2} = {} & \mid \mid T_{i_{1}i_{2}i_{3}} \mid \mid^{2} - \frac{n+1}{(n-1)(n+2)}(\mid \mid T^{1} \mid \mid^{2} + \mid \mid T^{2} \mid \mid^{2} + \mid \mid T^{3} \mid \mid^{2})  \notag \\ + {} & \frac{2}{(n-1)(n+2)}(\langle T^{1} , T^{2} \rangle + \langle T^{1} , T^{3} \rangle + \langle T^{2} , T^{3} \rangle)
	\end{align}
	2) If $\det(A) = 0$, noting that $A$ is the metric matrix of $T^{1},T^{2},T^{3}$, then $T^{1},T^{2},T^{3}$ are dependent linearly.  In this case the effect variables are less than before since we can use the relation of linear dependence, we can prove that the minimal norm tensor solved by the similar disscussion in $\det(A) \neq 0$  can be written in the form of equation \eqref{eq2.8} if we use  the relation of linear dependent in  equation \eqref{eq2.8}.  This proves the theorem.
\end{proof}

\begin{proof}[Proof of corollary \ref{cor1.2}]
	Letting $T_{i_{1}i_{2}i_{3}} = T_{ijk} = R_{lijk,l}$, then by a direct computation we have
	\begin{align}
	T_{ijk} = R_{lijk,l} = R_{ik,j} - R_{ij,k}
	\end{align}
	and \begin{align}
	T^{1}_{i} = 0, T^{2}_{j} = \frac{R_{,j}}{2}, T^{3}_{k} = - \frac{R_{,k}}{2}.
	\end{align}
	where we used the second Bianchi identity. Then \begin{align}
	F_{ijk} = R_{ik,j} - R_{ij,k} - \frac{1}{2(n-1)}(R_{,j}\delta_{ik} - R_{,k}\delta_{ij})
	\end{align}
	This is Cotton tensor $C_{ijk}$. This proves the corollary.
\end{proof}

\begin{proof}[Proof of theorem \ref{thm1.3}]
	By a direct computation we have
	\begin{align}
	\frac{\partial F_{i_{1}i_{2}i_{3}i_{4}}}{\partial x_{\alpha \beta}^{\tau(1)\tau(2)}} = \frac{1}{4} \sum_{\sigma \in \varphi(4)} \delta_{\sigma(1)}^{\tau(1)}\delta_{\sigma(2)}^{\tau(2)} T^{\alpha \beta}_{i_{\sigma(1)}i_{\sigma(2)}} \delta_{i_{\sigma(3)}i_{\sigma(4)}} = \frac{1}{2} T^{\alpha \beta}_{i_{\tau(1)}i_{\tau(2)}} \delta_{i_{\tau(3)}i_{\tau(4)}}
	\end{align}
	\begin{align}
	\frac{\partial F_{i_{1}i_{2}i_{3}i_{4}}}{\partial y^{\tau(1)\tau(2)}} = \frac{1}{4} \sum_{\sigma \in \varphi(4)} \delta_{\sigma(1)}^{\tau(1)}\delta_{\sigma(2)}^{\tau(2)} \delta_{i_{\sigma(1)}i_{\sigma(2)}} \delta_{i_{\sigma(3)}i_{\sigma(4)}} = \frac{1}{2}\delta_{i_{\tau(1)}i_{\tau(2)}} \delta_{i_{\tau(3)}i_{\tau(4)}}
	\end{align}
	The critical point equations are
	\begin{align}
	\frac{\partial \mid \mid F_{i_{1}i_{2}i_{3}i_{4}} \mid \mid^{2}}{\partial x_{\alpha \beta}^{\tau(1)\tau(2)}} = 2 F_{i_{1}i_{2}i_{3}i_{4}} \frac{\partial F_{i_{1}i_{2}i_{3}i_{4}}}{\partial x_{\alpha \beta}^{\tau(1)\tau(2)}} = 0, \frac{\partial \mid \mid F_{i_{1}i_{2}i_{3}i_{4}} \mid \mid^{2}}{\partial y^{\tau(1)\tau(2)}} = 2 F_{i_{1}i_{2}i_{3}i_{4}} \frac{\partial F_{i_{1}i_{2}i_{3}i_{4}}}{\partial y^{\tau(1)\tau(2)}} = 0
	\end{align}
	It is easy to see that
	\begin{align}
	\frac{\partial \mid \mid F_{i_{1}i_{2}i_{3}i_{4}} \mid \mid^{2}}{\partial y^{\tau(1)\tau(2)}} = 0 \Longleftrightarrow \sum_{i_{1} = i_{2}=1}F^{\tau(1)\tau(2)}_{i_{1}i_{2}} = \langle I, F^{\tau(1)\tau(2)} \rangle = 0
	\end{align}
	If $\tau(1) < \tau(2)$, then
	\begin{align}
	\frac{\partial \mid \mid F_{i_{1}i_{2}i_{3}i_{4}} \mid \mid^{2}}{\partial x_{\alpha \beta}^{\tau(1)\tau(2)}} = 0 \Longleftrightarrow T^{\alpha \beta}_{i_{\tau(1)}i_{\tau(2)}} F^{\tau(1)\tau(2)}_{i_{\tau(1)}i_{\tau(2)}} = \langle T^{\alpha \beta}, F^{\tau(1)\tau(2)} \rangle = 0.
	\end{align}
	If $\tau(2) < \tau(1)$, then
	\begin{align}
	\frac{\partial \mid \mid F_{i_{1}i_{2}i_{3}i_{4}} \mid \mid^{2}}{\partial x_{\alpha \beta}^{\tau(1)\tau(2)}} = 0 \Longleftrightarrow T^{\alpha \beta}_{i_{\tau(2)}i_{\tau(1)}} F^{\tau(1)\tau(2)}_{i_{\tau(1)}i_{\tau(2)}} = \langle (T^{\alpha \beta})^{T}, F^{\tau(1)\tau(2)} \rangle = 0.
	\end{align}
	where $(T^{\alpha \beta})^{T}_{i_{\tau(1)}i_{\tau(2)}} =T_{i_{\tau(2)}i_{\tau(1)}}^{\alpha \beta}$. Next we compute $F^{\tau(1)\tau(2)}$ under assumption that $\tau(1) < \tau(2)$ (since $F^{\tau(1)\tau(2)} = F^{\tau(2)\tau(1)}$), by a direct computation we have \pagebreak[3]
	\begin{align}
	F^{\tau(1)\tau(2)}_{i_{\tau(1)} i_{\tau(2)}} = {} &  \delta_{i_{\tau(3)}i_{\tau(4)}} F_{i_{1}i_{2}i_{3}i_{4}} \notag \\ = {} & T^{\tau(1)\tau(2)}_{i_{\tau(1)} i_{\tau(2)}} + \frac{1}{2}(n x^{\tau(1)\tau(2)}_{\alpha \beta} + x^{\tau(1)\tau(3)}_{\alpha \beta} + x^{\tau(1)\tau(4)}_{\alpha \beta} + x^{\tau(3)\tau(2)}_{\alpha \beta} + x^{\tau(4)\tau(2)}_{\alpha \beta})T^{\alpha \beta}_{i_{\tau(1)} i_{\tau(2)}} \notag \\ + {} & \frac{1}{2}(n x^{\tau(2)\tau(1)}_{\alpha \beta} + x^{\tau(3)\tau(1)}_{\alpha \beta} + x^{\tau(4)\tau(1)}_{\alpha \beta} + x^{\tau(2)\tau(3)}_{\alpha \beta} + x^{\tau(2)\tau(4)}_{\alpha \beta})T^{\alpha \beta}_{i_{\tau(2)} i_{\tau(1)}} \notag \\ + {} & 2(ny^{\tau(1)\tau(2)} + y^{\tau(1)\tau(3)} + y^{\tau(1)\tau(4)} + \frac{1}{4}(x^{\tau(3)\tau(4)}_{\alpha \beta} + x^{\tau(4)\tau(3)}_{\alpha \beta})t^{\alpha \beta})\delta_{i_{\tau(1)}i_{\tau(2)}}\notag \\ = {} & T^{\tau(1)\tau(2)}_{i_{\tau(1)} i_{\tau(2)}} + \sum_{\alpha < \beta}(n x^{\tau(1)\tau(2)}_{\alpha \beta} + x^{\tau(1)\tau(3)}_{\alpha \beta} + x^{\tau(1)\tau(4)}_{\alpha \beta} + x^{\tau(3)\tau(2)}_{\alpha \beta} + x^{\tau(4)\tau(2)}_{\alpha \beta})T^{\alpha \beta}_{i_{\tau(1)} i_{\tau(2)}} \notag \\ + {} & \sum_{\alpha < \beta} (n x^{\tau(2)\tau(1)}_{\alpha \beta} + x^{\tau(3)\tau(1)}_{\alpha \beta} + x^{\tau(4)\tau(1)}_{\alpha \beta} + x^{\tau(2)\tau(3)}_{\alpha \beta} + x^{\tau(2)\tau(4)}_{\alpha \beta})T^{\alpha \beta}_{i_{\tau(2)} i_{\tau(1)}} \notag \\ + {} & 2(ny^{\tau(1)\tau(2)} + y^{\tau(1)\tau(3)} + y^{\tau(1)\tau(4)} + \frac{1}{4}(x^{\tau(3)\tau(4)}_{\alpha \beta} + x^{\tau(4)\tau(3)}_{\alpha \beta})t^{\alpha \beta})\delta_{i_{\tau(1)}i_{\tau(2)}}
	\end{align}
	For convenience we denote that
	\begin{align}
	\bar{x}^{\tau(1)\tau(2)}_{\alpha \beta} = n x^{\tau(1)\tau(2)}_{\alpha \beta} + x^{\tau(1)\tau(3)}_{\alpha \beta} + x^{\tau(1)\tau(4)}_{\alpha \beta} + x^{\tau(3)\tau(2)}_{\alpha \beta} + x^{\tau(4)\tau(2)}_{\alpha \beta}
	\end{align}
	\begin{align}
	\bar{x}^{\tau(2)\tau(1)}_{\alpha \beta} = n x^{\tau(2)\tau(1)}_{\alpha \beta} + x^{\tau(3)\tau(1)}_{\alpha \beta} + x^{\tau(4)\tau(1)}_{\alpha \beta} + x^{\tau(2)\tau(3)}_{\alpha \beta} + x^{\tau(2)\tau(4)}_{\alpha \beta}
	\end{align}
	\begin{align}
	\bar{y}^{\tau(1)\tau(2)} = 2(ny^{\tau(1)\tau(2)} + y^{\tau(1)\tau(3)} + y^{\tau(1)\tau(4)} + \frac{1}{4}(x^{\tau(3)\tau(4)}_{\alpha \beta} + x^{\tau(4)\tau(3)}_{\alpha \beta})t^{\alpha \beta})
	\end{align}
	where $\tau(1) < \tau(2)$. Then we can rewrite $F^{\tau(1)\tau(2)}$ as following
	\begin{align}
	F^{\tau(1)\tau(2)} = T^{\tau(1)\tau(2)} + \sum_{\alpha < \beta} \bar{x}^{\tau(1)\tau(2)}_{\alpha \beta} T^{\alpha \beta} + \bar{x}^{\tau(2)\tau(1)}_{\alpha \beta} (T^{\alpha \beta})^{T}
	\end{align}
	Combining with the critical point equations
	\begin{align}
	\langle (T^{\alpha \beta})^{T}, F^{\tau(1)\tau(2)} \rangle = 0, \langle T^{\alpha \beta}, F^{\tau(1)\tau(2)} \rangle = 0, \langle I, F^{\tau(1)\tau(2)} \rangle = 0.
	\end{align}
	1) If $\lbrace T^{\alpha \beta},(T^{\alpha \beta})^{T}, I \rbrace$  (where $(1 \le \alpha < \beta) \le 4$) are linearly independent, then the determinate of inner product matrix is positive. By cramer rule we know $F^{\tau(1) \tau(2)} = 0$ since all coefficients are zero. Then we have
	\begin{align}
	1 + \bar{x}^{\tau(1)\tau(2)}_{\tau(1)\tau(2)} + \bar{x}^{\tau(2)\tau(1)}_{\tau(1)\tau(2)} = 0, \tau(1) < \tau(2).
	\end{align}
	\begin{align}
	\bar{x}^{\tau(1)\tau(2)}_{\alpha \beta} + \bar{x}^{\tau(2)\tau(1)}_{\alpha \beta} = 0, \alpha < \beta, \alpha \beta \neq \tau(1) \tau(2),\tau(1) <  \tau(2).
	\end{align}
	Then
	we can rewrite it by the definition of $\bar{x}$ that
	\begin{align}
	Z =\begin{bmatrix}
	n & 1 & 1 & 1 & 1 & 0 \\ 1 & n & 1 & 1 & 0 & 1 \\ 1 & 1 & n & 0 & 1 & 1 \\ \vdots & \vdots & \cdots & \cdots & \cdots & \vdots \\ 0 & 1 & 1 & & 1 & n
	\end{bmatrix} 	\begin{bmatrix}
	x^{12}_{12} + x^{21}_{12} & x^{12}_{13} + x^{21}_{13} & \cdots &  x^{12}_{34} + x^{21}_{34} \\
	x^{13}_{12} + x^{31}_{12} & x^{1
		3}_{13} + x^{31}_{13} & \cdots &  x^{13}_{34} + x^{31}_{43} \\ \vdots & \vdots & \cdots & \vdots \\
	x^{34}_{12} + x^{43}_{12} & x^{34}_{13} + x^{43}_{13} & \cdots &  x^{34}_{34} + x^{43}_{34}
	\end{bmatrix} = C \tilde{X} =-I
	\end{align}
	where
	\begin{align}
	Z =
	\begin{bmatrix}
	\bar{x}^{12}_{12} + \bar{x}^{21}_{12} & \bar{x}^{12}_{13} + \bar{x}^{21}_{13} & \cdots &  \bar{x}^{12}_{34} + \bar{x}^{21}_{34} \\
	\bar{x}^{13}_{12} + \bar{x}^{31}_{12} & \bar{x}^{1
		3}_{13} + \bar{x}^{31}_{13} & \cdots &  \bar{x}^{13}_{34} + \bar{x}^{31}_{43} \\ \vdots & \vdots & \cdots & \vdots \\
	\bar{x}^{34}_{12} + \bar{x}^{43}_{12} & \bar{x}^{34}_{13} + \bar{x}^{43}_{13} & \cdots &  \bar{x}^{34}_{34} + \bar{x}^{43}_{34}
	\end{bmatrix}
	\end{align}
	We can solve it as following
	\begin{align}
	\tilde{X} =	\frac{-1}{n^{2}+2 n-8}	\begin{bmatrix}
	\frac{n^{2}+2 n-4}{n} & -1 & -1 & -1 & -1 & \frac{4}{n} \\
	-1 & \frac{n^{2}+2 n-4}{n} & -1 & -1 & \frac{4}{n} & -1 \\
	-1 & -1 & \frac{n^{2}+2 n-4}{n} & \frac{4}{n} & -1 & -1 \\
	-1 & -1 & \frac{4}{n} & \frac{n^{2}+2 n-4}{n} & -1 & -1 \\
	\frac{4}{n} & -1 & -1 & -1 & -1 & \frac{n^{2}+2 n-4}{n}
	\end{bmatrix}
	\end{align}
	On the other hand, noting that $F^{\tau(1)\tau(2)}_{i_{1}i_{2}} = 0$, then $F^{\tau(1)\tau(2)}_{i_{1}i_{2}} - F^{\tau(1)\tau(2)}_{i_{2}i_{1}} = 0$. which implies that
	\begin{align}
	1 + \bar{x}^{\tau(1)\tau(2)}_{\tau(1)\tau(2)} - \bar{x}^{\tau(2)\tau(1)}_{\tau(1)\tau(2)} = 0, \tau(1) < \tau(2).
	\end{align}
	\begin{align}
	\bar{x}^{\tau(1)\tau(2)}_{\alpha \beta} - \bar{x}^{\tau(2)\tau(1)}_{\alpha \beta} = 0, \alpha < \beta, \alpha \beta \neq \tau(1) \tau(2),\tau(1) <  \tau(2).
	\end{align}
	In the similar manner we can rewrite it by the definition of $\bar{x}$ that
	\begin{align}
	W = {} & \begin{bmatrix}
	n & 1 & 1 & -1 & -1 & 0 \\
	1 & n & 1 & 1 & 0 & -1 \\
	1 & 1 & n & 0 & 1 & 1 \\
	-1 & 1 & 0 & n & 1 & -1 \\
	-1 & 0 & 1 & 1 & n & 1 \\
	0 & -1 & 1 & -1 & 1 & n
	\end{bmatrix} \begin{bmatrix}
	x^{12}_{12} - x^{21}_{12} & x^{12}_{13} - x^{21}_{13} & \cdots &  x^{12}_{34} - x^{21}_{34} \\
	x^{13}_{12} - x^{31}_{12} & x^{1
		3}_{13} - x^{31}_{13} & \cdots &  x^{13}_{34} - x^{31}_{43} \\ \vdots & \vdots & \cdots & \vdots \\
	x^{34}_{12} - x^{43}_{12} & x^{34}_{13} - x^{43}_{13} & \cdots &  x^{34}_{34} - x^{43}_{34}
	\end{bmatrix} \notag \\ = {} & D \tilde{\tilde{X}} =-I
	\end{align}
	where
	\begin{align}
	W =
	\begin{bmatrix}
	\bar{x}^{12}_{12} - \bar{x}^{21}_{12} & \bar{x}^{12}_{13} - \bar{x}^{21}_{13} & \cdots &  \bar{x}^{12}_{34} - \bar{x}^{21}_{34} \\
	\bar{x}^{13}_{12} - \bar{x}^{31}_{12} & \bar{x}^{1
		3}_{13} - \bar{x}^{31}_{13} & \cdots &  \bar{x}^{13}_{34} - \bar{x}^{31}_{43} \\ \vdots & \vdots & \cdots & \vdots \\
	\bar{x}^{34}_{12} - \bar{x}^{43}_{12} & \bar{x}^{34}_{13} - \bar{x}^{43}_{13} & \cdots &  \bar{x}^{34}_{34} - \bar{x}^{43}_{34}
	\end{bmatrix}
	\end{align}
	We can solve it as following
	\begin{align}
	\tilde{\tilde{X}} =	-\begin{bmatrix}
	\frac{n}{n^{2}-4} & \frac{1}{4-n^{2}} & \frac{1}{4-n^{2}} & \frac{1}{n^{2}-4} & \frac{1}{n^{2}-4} & 0 \\
	\frac{1}{4-n^{2}} & \frac{n}{n^{2}-4} & \frac{1}{4-n^{2}} & \frac{1}{4-n^{2}} & 0 & \frac{1}{n^{2}-4} \\
	\frac{1}{4-n^{2}} & \frac{1}{4-n^{2}} & \frac{n}{n^{2}-4} & 0 & \frac{1}{4-n^{2}} & \frac{1}{4-n^{2}} \\
	\frac{1}{n^{2}-4} & \frac{1}{4-n^{2}} & 0 & \frac{n}{n^{2}-4} & \frac{1}{4-n^{2}} & \frac{1}{n^{2}-4} \\
	\frac{1}{n^{2}-4} & 0 & \frac{1}{4-n^{2}} & \frac{1}{4-n^{2}} & \frac{n}{n^{2}-4} & \frac{1}{4-n^{2}} \\
	0 & \frac{1}{n^{2}-4} & \frac{1}{4-n^{2}} & \frac{1}{n^{2}-4} & \frac{1}{4-n^{2}} & \frac{n}{n^{2}-4}
	\end{bmatrix}
	\end{align}
	Finally we have
	\begin{align}
	X^{\tau(1)\tau(2)} = \begin{bmatrix}
	a & b & b & c & c & d \\
	b & a & b & b & d & c \\
	b & b & a & d & b & b \\
	c & b & d & a & b & c \\
	c & d & b & b & a & b \\
	d & c & b & c & b & a\end{bmatrix}, \tau(1) < \tau(2)
	\end{align}
	and
	\begin{align}
	X^{\tau(2)\tau(1)} = \begin{bmatrix}
	e & c & c & b & b & d \\
	c & e & c & c & d & b \\
	c & c & e & d & c & c \\
	b & c & d & e & c & b \\
	b & d & c & c & e & c \\
	d & b & c & b & c & e\end{bmatrix}, \tau(1) < \tau(2)
	\end{align}
	where
	\begin{align}
	a = -\frac{n^3+4 n^2-4}{(n-2) n (n+2) (n+4)}, b =  \frac{n+3}{(n-2)(n+2)(n+4)}.
	\end{align}
	\begin{align}
	c = \frac{-1}{(n-2)(n+2)(n+4)}, d = \frac{-2}{(n-2)n(n+4)},e = \frac{4}{(n-2) n (n+2) (n+4)}.
	\end{align}
	Now we consider the equations that $\bar{y}^{\tau(1)\tau(2)} = 0$, it is easy to check that $\bar{y}^{12} = \bar{y}^{34} = 0$, $\bar{y}^{13} = \bar{y}^{24} = 0$, $\bar{y}^{14} = \bar{y}^{23} = 0$. Then we only need to consider the following equations
	\begin{align}
	2(ny^{12} + y^{13} + y^{14}) = \frac{n+2}{(n-2)(n+4)} t^{12} - \frac{2}{(n+4)(n-2)}(t^{13} + t^{14})
	\end{align}
	\begin{align}
	2(y^{12} + n y^{13} + y^{14}) = \frac{n+2}{(n-2)(n+4)} t^{13} - \frac{2}{(n+4)(n-2)}(t^{12} + t^{14})
	\end{align}
	\begin{align}
	2(y^{12} + y^{13} + n y^{14}) = \frac{n+2}{(n-2)(n+4)} t^{14} - \frac{2}{(n+4)(n-2)}(t^{12} + t^{13})
	\end{align}
	We can solve it as following
	\begin{align}
	y^{12} = \frac{(n^2 + 3n + 6)t^{12}}{(n-2)(n-1)(n+2)(n+4)} - \frac{3n+2}{(n-2)(n-1)(n+2)(n+4)}(t^{13}+t^{14})
	\end{align}
	\begin{align}
	y^{13} = \frac{(n^2 + 3n + 6)t^{13}}{(n-2)(n-1)(n+2)(n+4)} - \frac{3n+2}{(n-2)(n-1)(n+2)(n+4)}(t^{12}+t^{14})
	\end{align}
	\begin{align}
	y^{14} = \frac{(n^2 + 3n + 6)t^{14}}{(n-2)(n-1)(n+2)(n+4)} - \frac{3n+2}{(n-2)(n-1)(n+2)(n+4)}(t^{12}+t^{13})
	\end{align}
	Finally the minimal norm tensor is
	\begin{align}
	F_{i_{1}i_{2}i_{3}i_{4}} = {} &  T_{i_{1}i_{2}i_{3}i_{4}} - \frac{2}{n(n-2)(n+4)} \sum_{\sigma \in \varphi^{\prime}_{1}(4)} T^{\sigma(3)\sigma(4)}_{i_{\sigma(1)}i_{\sigma(2)}} \delta_{i_{\sigma(3)}i_{\sigma(4)}} \notag \\ + {} & \frac{2}{n(n-2)(n+4)(n+2)}\sum_{\sigma \in \varphi^{\prime}_{1}(4)} T^{\sigma(3)\sigma(4)}_{i_{\sigma(4)}i_{\sigma(3)}} \delta_{i_{\sigma(1)}i_{\sigma(2)}} \notag \\ - {} & \frac{n^3+4n^2-4}{2n(n-2)(n+4)(n+2)}\sum_{\sigma \in \varphi^{\prime}_{1}(4)} T^{\sigma(3)\sigma(4)}_{i_{\sigma(3)}i_{\sigma(4)}}\delta_{i_{\sigma(1)}i_{\sigma(2)}} \notag \\ + {} & \frac{n+3}{(n-2)(n+4)(n+2)} \sum_{\sigma \in \varphi^{\prime}_{1}(4)}(T^{\sigma(3)\sigma(4)}_{i_{\sigma(3)}i_{\sigma(1)}}\delta_{i_{\sigma(2)}i_{\sigma(4)}} + T^{\sigma(3)\sigma(4)}_{i_{\sigma(1)}i_{\sigma(4)}}\delta_{i_{\sigma(2)}i_{\sigma(3)}}) \notag \\ - {} & \frac{1}{(n-2)(n+2)(n+4)}\sum_{\sigma \in \varphi^{\prime}_{1}(4)}(T^{\sigma(3)\sigma(4)}_{i_{\sigma(1)}i_{\sigma(3)}}\delta_{i_{\sigma(2)}i_{\sigma(4)}} + T^{\sigma(3)\sigma(4)}_{i_{\sigma(4)}i_{\sigma(1)}}\delta_{i_{\sigma(2)}i_{\sigma(3)}}) \notag \\ + {} &  \frac{n^2+3n+6}{4(n-2)(n+4)(n+2)(n-1)}\sum_{\sigma \in \varphi^{\prime}_{1}(4)}t^{\sigma(1)\sigma(2)}\delta_{i_{\sigma(1)}i_{\sigma(2)}} \delta_{i_{\sigma(3)}i_{\sigma(4)}} \notag \\ - {} &  \frac{3n+2}{2(n-2)(n+4)(n+2)(n-1)}\sum_{\sigma \in \varphi^{\prime}_{1}(4)}t^{\sigma(1)\sigma(3)}\delta_{i_{\sigma(1)}i_{\sigma(2)}} \delta_{i_{\sigma(3)}i_{\sigma(4)}} \label{eq2.45}
	\end{align}
	where $\varphi^{\prime}_{1}(4) = \lbrace \sigma \in \varphi(4) \mid \sigma(3) < \sigma(4) \rbrace$. By a direct computation we can get the minimal norm is \begin{align}
	\mid \mid F_{i_{1}i_{2}i_{3}i_{4}} \mid \mid^{2} = {} &  \mid \mid T_{i_{1}i_{2}i_{3}i_{4}} \mid \mid^{2} - \frac{2}{n(n-2)(n+4)} \sum_{\sigma \in \varphi^{\prime}_{2}(4)} T^{\sigma(3)\sigma(4)}_{i_{\sigma(1)}i_{\sigma(2)}} (T^{\sigma(1)\sigma(2)}_{i_{\sigma(1)}i_{\sigma(2)}} + T^{\sigma(1)\sigma(2)}_{i_{\sigma(2)}i_{\sigma(1)}}) \notag \\ + {} & \frac{4}{n(n-2)(n+4)(n+2)}\sum_{\sigma \in \varphi^{\prime}_{2}(4)} T^{\sigma(3)\sigma(4)}_{i_{\sigma(4)}i_{\sigma(3)}} T^{\sigma(3)\sigma(4)}_{i_{\sigma(3)}i_{\sigma(4)}} \notag \\ - {} & \frac{n^3+4n^2-4}{2n(n-2)(n+4)(n+2)}\sum_{\sigma \in \varphi^{\prime}_{1}(4)} T^{\sigma(3)\sigma(4)}_{i_{\sigma(3)}i_{\sigma(4)}}T^{\sigma(3)\sigma(4)}_{i_{\sigma(3)}i_{\sigma(4)}} \notag \\ + {} & \frac{2(n+3)}{(n-2)(n+4)(n+2)} (\sum_{\sigma \in \varphi^{\prime}_{3}(4)}T^{\sigma(3)\sigma(4)}_{i_{\sigma(3)}i_{\sigma(1)}}T^{\sigma(1)\sigma(3)}_{i_{\sigma(1)}i_{\sigma(3)}} + \sum_{\sigma \in \varphi^{\prime}_{4}(4)} T^{\sigma(3)\sigma(4)}_{i_{\sigma(1)}i_{\sigma(4)}} T^{\sigma(1)\sigma(4)}_{i_{\sigma(1)}i_{\sigma(4)}}) \notag \\ - {} & \frac{2}{(n-2)(n+2)(n+4)}(\sum_{\sigma \in \varphi^{\prime}_{3}(4)}T^{\sigma(3)\sigma(4)}_{i_{\sigma(1)}i_{\sigma(3)}}T^{\sigma(1)\sigma(3)}_{i_{\sigma(1)}i_{\sigma(3)}} + \sum_{\sigma \in \varphi^{\prime}_{4}(4)} T^{\sigma(3)\sigma(4)}_{i_{\sigma(4)}i_{\sigma(1)}}T^{\sigma(1)\sigma(4)}_{i_{\sigma(1)}i_{\sigma(4)}}) \notag \\ + {} &  \frac{n^2+3n+6}{4(n-2)(n+4)(n+2)(n-1)}\sum_{\sigma \in \varphi^{\prime}_{1}(4)}(t^{\sigma(1)\sigma(2)})^{2} \notag \\ - {} &  \frac{3n+2}{2(n-2)(n+4)(n+2)(n-1)}\sum_{\sigma \in \varphi^{\prime}_{1}(4)}t^{\sigma(1)\sigma(3)}t^{\sigma(1)\sigma(2)}
	\end{align}
	where $\varphi^{\prime}_{2}(4) = \lbrace \sigma \in \varphi(4) \mid \sigma(3) < \sigma(4), \sigma(1) < \sigma(2) \rbrace$, $\varphi^{\prime}_{3}(4) = \lbrace \sigma \in \varphi(4) \mid \sigma(3) < \sigma(4), \sigma(1) < \sigma(3) \rbrace$ and $\varphi^{\prime}_{4}(4) = \lbrace \sigma \in \varphi(4) \mid \sigma(3) < \sigma(4), \sigma(1) < \sigma(4) \rbrace$. \\
	2) If $\lbrace T^{\alpha \beta},(T^{\alpha \beta})^{T}, I \rbrace$ (where $(1 \le \alpha < \beta) \le 4$) are linearly dependent, then the effect variables are less than before since we can use the relation of linear dependence, we can prove that the minimal norm tensor solved by the similar disscussion (in $\lbrace T^{\alpha \beta},(T^{\alpha \beta})^{T}, I \rbrace$ are linearly independent) can be written by equation \eqref{eq2.45} if we use the relation of linear dependence again in equation \eqref{eq2.45}.  This proves the theorem.
\end{proof}
\begin{proof}[Proof of Corollary \ref{cor1.5}]
	Letting $T_{i_{1}i_{2}i_{3}i_{4}} = T_{ijkl} = R_{ijkl}$, by a direct computation
	\begin{align}
	T^{(1,2)}_{ij} = T^{(1,3)}_{ij} = 0, T^{(1,2)(3,4)} = 0, T^{(1,3)(2,4)} = -T^{(1,4)(2,3)} = - R.
	\end{align}
	\begin{align}
	T^{(1,3)}_{ij} = T^{(2,4)}_{ij} = R_{ij}, T^{(1,4)}_{ij} = T^{(2,3)}_{ij} = -R_{ij}.
	\end{align}
	Then we have
	\begin{align}
	F_{ijkl} = {} & R_{ijkl}  - \frac{1}{n-2}[R_{jl}\delta_{ik} - R_{jk}\delta_{il} - R_{il}\delta_{jk} + R_{ik}\delta_{jl}] \notag \\ & + \frac{R}{(n-1)(n-2)}(\delta_{jl}\delta_{ik} - \delta_{jk}\delta_{il})
	\end{align}
	This is Weyl conformal curvature tensor. This proves the corollary.
\end{proof}
	
	\nocite{*}
	\bibliographystyle{IEEEtran}
	\bibliography{minimal}

\end{document}